\def\N			{\mathbb N}
\def\Z			{\mathbb Z}
\def\R			{\mathbb R}
\def\Ball		{\mathbb B}
\def\Radon		{\mathcal R}
\def\Back		{\mathcal B}
\def\Fourier		{\mathcal F}
\def\Int 		{\mathfrak I}
\def\Cont		{\mathrm C}
\def\Lebesgue	{\mathrm L}
\def\Sobolev		{\mathrm H}
\def\Bernstein	{\mathscr B}
\def\d			{\mathrm d}
\def\e			{\mathrm e}
\def\i			{\mathrm i}
\def\T			{\mathrm T}
\def\Mod			{\mathcal M}
\DeclareMathOperator{\DFT}{DFT}
\DeclareMathOperator{\IDFT}{iDFT}
\DeclareMathOperator{\SSIM}{SSIM}
\DeclareMathOperator{\supp}{supp}
\DeclareMathOperator{\round}{round}
\DeclareMathOperator{\per}{per}
\DeclareMathOperator{\FBP}{FBP}
\DeclareMathOperator{\LMU}{LMU}
\newcommand{\norm}[2]{\left\lVert#1\right\rVert_{{#2}}}
\newcommand{\abs}[1]{\left\lvert#1\right\rvert}
\newcommand{\floor}[1]{\ensuremath{\left \lfloor {#1} \right \rfloor}}
\newcommand{\closure}[1]{\ensuremath{\overline{{#1}}}}
\newcommand{\Lper}{\Lebesgue^2_{\per}}
\newcommand{\HperT}[1]{\ensuremath{\Sobolev^{1}_{\per}}(\T^{{#1}})}
\newcommand{\Hper}{\ensuremath{\Sobolev^{1}_{\per}}}
\newcommand{\HperDT}[1]{\ensuremath{\Sobolev^{1}_{\per,D}}(\T^{{#1}})}
\newcommand{\cperD}[2]{\ensuremath{\Cont_{\per, D}}^{{#1}}(\T^{{#2}})}
\newcommand{\laplace}{\Delta}
\newcommand{\deriv}[2][]{\ensuremath{\frac{\partial^{#1} }{\partial {#2}^{#1}}}}
\newcommand{\set}[2]{\ensuremath{\left\{ {#1}  \, \middle \vert \,   {#2} \right\}}}
\newcommand{\idxset}[2]{\ensuremath{\left\{{#1}, \dots, {#2}\right\}}}
\newcommand{\bandw}{\ensuremath{L}}
\begin{document}
\title{On an Analytical Inversion Formula for the Modulo Radon Transform}
\titlerunning{Analytical Inversion Formula for the Modulo Radon Transform}
\author{Matthias Beckmann\inst{1,2} \and Carla Dittert\inst{1}}
\authorrunning{M.~Beckmann, C.~Dittert}
\institute{Center for Industrial Mathematics, University of Bremen, \\
Bibliothekstra{\ss}e 5, 28359 Bremen, Germany\\
\email{\{research@mbeckmann.de,cadi@uni-bremen.de\}}
\and
Department of Electrical and Electronic Engineering, Imperial College London, \\
London, SW72AZ, UK}
\maketitle
\begin{abstract}
This paper proves a novel analytical inversion formula for the so-called modulo Radon transform (MRT), which models a recently proposed approach to one-shot high dynamic range tomography.
It is based on the solution of a Poisson problem linking the Laplacian of the Radon transform (RT) of a function to its MRT in combination with the classical filtered back projection formula for inverting the RT.
Discretizing the inversion formula using Fourier techniques leads to our novel Laplacian Modulo Unfolding - Filtered Back Projection algorithm, in short LMU-FBP, to recover a function from fully discrete MRT data.
Our theoretical findings are finally supported by numerical experiments.

\keywords{X-ray computerized tomography \and high dynamic
range \and Radon transform \and modulo non-linearity \and analytical inversion formula.}
\end{abstract}

\section{Introduction}
Computerized tomography (CT) deals with the recovery of the interior of scanned objects from given X-ray measurements taken at different views.
By design, the detector's dynamic range is limited leading to saturation artifacts in the reconstruction of high-contrast objects.
The goal of increasing the dynamic range points towards the recent topic of high dynamic range (HDR) tomography.
Inspired by consumer HDR photography, typical multi-exposure approaches combine several low dynamic range measurements into a single image with increased dynamic range~\cite{Chen2015}.
To avoid the drawback of manual calibration of the detector for each exposure, an automated calibration approach is proposed in~\cite{Chen2020,Li2023}. 
Unfortunately, multi-exposure approaches suffer from the same difficulties as HDR photography: ghosting artifacts due to movements of the investigated object; increased acquisition time; unknown detector's sensor response necessary for satisfactory image fusion.
Furthermore, the mentioned approaches are solely based on empirical experiments lacking mathematical recovery guarantees.

As opposed to this, inspired by the Unlimited Sampling (US) framework~\cite{Bhandari2020,Bhandari2021}, a recently introduced single-exposure approach based on a co-design of hardware and algorithms allows for mathematically backed recovery strategies for HDR CT \cite{Beckmann2022,Beckmann2024}.
Here, instead of truncating measurements that exceed the dynamic range, these are folded into the limited range of the detector via a modulo operation implemented in hardware.
Thereon, these folded measurements are unfolded algorithmically.
Mathematically, this can be modeled by the so-called modulo Radon transform (MRT)~\cite{Beckmann2022}, which concatenates the modulo operation with the well-known Radon transform (RT), yielding a nonlinear reconstruction problem.
Albeit algorithmic recovery strategies exist~\cite{Beckmann2022,Beckmann2024}, an explicit inversion formula for the MRT is still missing.
In this work, we address this point by proposing an analytical inversion formula for the MRT, that combines the solution of a Poisson equation to unfold the modulo operation with the well-known filtered back projection (FBP) formula to invert the RT.
The formulation of the Poisson equation is inspired by an approach to the classical phase unwrapping problem in~\cite{Robinson2016,Schofield2003}.
By discretizing our analytical inversion formula, we deduce the novel LMU-FBP algorithm for recovering a target from MRT data, which operates simultaneously on the angle and radial variable in contrast to the US-FBP algorithm~\cite{Beckmann2022}.
Moreover, our inversion formula does not require bandlimited Radon data so that LMU-FBP does not rely on bandlimitedness as opposed to~\cite{Beckmann2024}.

The paper starts with an overview of the MRT in §\ref{sec:MRT}.
This is followed by our main theoretical result: the deduction of the Poisson equation with suitable boundary conditions in §\ref{sec:Poisson_equation} and the proof of the analytical inversion in §\ref{sec:analytical_inversion}.
Finally, we present our LMU-FBP algorithm in §\ref{sec:numerical_inversion} and showcase numerical experiments with non-bandlimited test cases in §\ref{sec:numerical_experiments} including a comparison with US-FBP~\cite{Beckmann2022}.

\section{Modulo Radon Transform}\label{sec:MRT}

For $f \in \Lebesgue^1(\R^2)$, we define its {\em Radon transform} (RT) $\Radon f \in \Lebesgue^1((-\pi,\pi) \times \R)$ via
\begin{equation*}
\Radon f (\vartheta, t) \coloneq \int_{\R} f(t \cos(\vartheta) - s \sin(\vartheta), t \sin(\vartheta) + s \cos(\vartheta)) \: \d s.
\end{equation*}
Moreover, for $g \in \Lebesgue^\infty((-\pi, \pi) \times \R)$, we define its {\em back projection} $\Back g \in \Lebesgue^\infty(\R^2)$ via
\begin{equation*}
\Back g(x) \coloneqq  \int_{- \pi}^{\pi} g(\vartheta, x_1 \cos(\vartheta) + x_2\sin(\vartheta)) \: \d \vartheta.
\end{equation*}
Note that $\Radon$ is injective on $\Lebesgue^1(\R^2)$ and, for $f \in \Lebesgue^1(\R^2) \cap \Cont(\R^2)$ with integrable Fourier transform $\Fourier f \in \Lebesgue^1(\R^2)$, the inversion of $\Radon$ is given by the classical {\em filtered back projection formula}
\begin{equation}\label{eq:fbp_formula}
f(x) = \frac{1}{4 \pi} \Back \left( \Fourier^{-1} \left[\abs{S} \Fourier\left(\Radon f\right)(\vartheta, S)\right]\right)(x),
\end{equation}
which holds pointwise for all $x\in \R^2$.
The FBP formula~\eqref{eq:fbp_formula}, however, requires exact knowledge of $\Radon f$ and, in particular, saturation effects due to range limitations lead to severe artefacts in the reconstruction.
To circumvent this, in~\cite{Beckmann2022} the following {\em modulo Radon transform} is introduced, which folds $\Radon f$ into a given range interval $[-\lambda,\lambda]$ with $\lambda > 0$.

\begin{definition}[Modulo Radon transform]
For $\lambda > 0$ and $f \in \Lebesgue^1(\R^2)$ the {\em modulo Radon transform} (MRT) $\Radon^\lambda f : (-\pi, \pi) \times \R \rightarrow [-\lambda, \lambda]$ is defined as
\begin{equation*}
\Radon^\lambda f (\vartheta, t) \coloneqq \Mod^\lambda(\Radon f (\vartheta, t)),
\end{equation*}
where $\Mod^\lambda: \R \to [-\lambda,\lambda]$ is the {\em $2\lambda$-modulo operator} with $\Mod^\lambda (t) \coloneqq t - 2\lambda \floor{ \frac{t + \lambda}{2\lambda}}$.
\end{definition}

In~\cite{Beckmann2022}, it is shown that $\Radon^\lambda$ is injective on the space $\Bernstein^1_L(\R^2)$ of bandlimited integrable functions with bandwidth $L$ for any $L>0$ and on the space $\Cont_c(\R^2)$ of continuous functions with compact support, which implies the invertibility of $\Radon^\lambda$ on suitable spaces.
An analytical inversion formula, however, is not known so far.
In the following sections, we take steps towards closing this gap by first relating the Laplacian of $\Radon f$ to $\Radon^\lambda f$ and, afterwards, showing  unique solvability of the corresponding Poisson problem.
This in combination with the FBP formula~\eqref{eq:fbp_formula} then gives a new analytical inversion formula for $\Radon^\lambda$ under suitable assumptions.

\section{Poisson Problem for the Modulo Radon Transform}\label{sec:Poisson_equation}

Relating the Laplacian of $\Radon f$ to $\Radon^\lambda f$ via a Poisson problem is inspired by~\cite{Schofield2003} from the context of phase unwrapping.
Let $f\in \Lebesgue^1(\R^2)$ such that $\Radon f$ is twice continuously differentiable on $(-\pi, \pi)\times \R$.
Furthermore, let $\lambda > 0$ be the modulo threshold.
Then, the Laplacian of $\exp{\i \frac{\pi}{\lambda}\Radon f}$ can be computed as
\begin{equation*}
\laplace \exp{\i\frac{\pi}{\lambda} \Radon f} = \Biggl[\i \frac{\pi}{\lambda} \laplace \Radon f - \biggl(\frac{\pi}{\lambda} \deriv[]{\vartheta} \Radon f\Biggr)^2 - \biggl(\frac{\pi}{\lambda} \deriv[]{t} \Radon f\biggr)^2\biggr] \exp{\i\frac{\pi}{\lambda} \Radon f},
\end{equation*}
and, thereon, for the Laplacian of $\Radon f$ follows that
\begin{equation}\label{eq:laplace_radon}
\laplace \Radon f = \frac{\lambda}{\pi} \left[\cos\Bigl(\frac{\pi}{\lambda} \Radon f\Bigr) \laplace \sin\Bigl(\frac{\pi}{\lambda} \Radon f\Bigr) - \sin\Bigl(\frac{\pi}{\lambda} \Radon f\Bigr) \laplace \cos\Bigl(\frac{\pi}{\lambda} \Radon f\Bigr)\right].
\end{equation}
Observe that $\Radon f$ can be decomposed pointwise into $\Radon^\lambda f$ and a piecewise constant residual function with values in $2\lambda\Z$, i.e.,
\begin{equation} \label{eq:mod_decomposition}
\Radon f = \Radon^{\lambda} f +  2 \lambda \varepsilon_{\Radon f}
\quad \text{with} \quad
\varepsilon_{\Radon f} = \sum\nolimits_{i \in \mathcal{I}} c_i \mathrm{1}_{D_i}.
\end{equation}
Here, $\mathcal{I}$ denotes an arbitrary index set such that $\R^2$ is covered by the pairwise disjunct sets $\{D_i\}_{i \in \mathcal{I}}$, and the coefficients $c_i \in \Z$ are integers.
Exploiting the $2\pi$-periodicity of the sine and cosine function in~\eqref{eq:laplace_radon} results in the {\em Poisson equation}
\begin{equation}\label{eq:poisson_equation}
\laplace \Radon f = \frac{\lambda}{\pi} \left[ \cos\Bigl(\frac{\pi}{\lambda} \Radon^{\lambda} f\Bigr) \laplace \sin\Bigl(\frac{\pi}{\lambda} \Radon^{\lambda} f\Bigr) - \sin\Bigl(\frac{\pi}{\lambda} \Radon^{\lambda} f\Bigr) \laplace \cos\Bigl(\frac{\pi}{\lambda} \Radon^{\lambda} f\Bigr) \right].
\end{equation}
To compute $\Radon f$ from given $\Radon^\lambda f$, we aim to solve the Poisson equation~\eqref{eq:poisson_equation}.
For this, we restrict the domain of $\Radon f$ to a bounded rectangle and impose boundary conditions.
Note that, for applications like CT, it is reasonable to assume that the function $f$ is compactly supported.
Hence, in the following, let $f \in \Lebesgue^1(\R^2)$ have compact support in the open unit ball $\Ball_1(0)\subseteq\R^2$.
As the compact support of $f$ transfers to $\Radon f$, we obtain $\supp \Radon f(\vartheta, \cdot) \subseteq (-1,1)$ for all $\vartheta \in (-\pi, \pi)$.
Consequently, we assume $f \in \Lebesgue^1(\Ball_1(0))$ as in~\cite{Rieder2000},~\cite{Natterer2001} and extend $f$ by zero if needed.
On the restricted Lipschitz domain $\Omega = (-\pi, \pi) \times (-1,1)$, the RT $\Radon f: \Omega \rightarrow \R$ can be represented as
\begin{equation*}
\Radon f(\vartheta, t) = \int_{\ell_{\theta(\vartheta), t} \cap \Ball_1(0)} f(x) \d x
\end{equation*}
with $\theta(\vartheta) = (\cos(\vartheta), \sin(\vartheta))^\perp$.
With this, $\Radon f$ satisfies homogeneous Dirichlet boundary conditions on $
\Gamma_D \coloneqq (-\pi, \pi) \times \{-1\} \cup (-\pi, \pi) \times \{1\} \subseteq \partial \Omega$,
where $\partial \Omega$ denotes the boundary of $\Omega$.
Furthermore, $\Radon f$ is $2 \pi$-periodic with respect to $\vartheta$.
Let $\Radon f$ also be continuously differentiable with respect to the first argument on $[-\pi, \pi] \times \R$.
Then, $\Radon f$ and its first partial derivative with respect to $\vartheta$ have to satisfy $2\pi$-periodic boundary conditions on $\Gamma_P \coloneqq \{-\pi\} \times (-1, 1) \cup \{\pi\} \times (-1,1) \subseteq \partial \Omega$.
Altogether, $\Radon f$ has to fulfill the {\em mixed Dirichlet-periodic boundary conditions}
\begin{equation}\label{eq:boundary_conditions}
\begin{cases}
\Radon f (\vartheta, -1)               = \Radon f (\vartheta, 1) = 0         & \text{ for } \vartheta \in (-\pi, \pi) \\
\Radon f(- \pi, t)                     = \Radon f(\pi, t)                    & \text{ for } t \in (-1,1)             \\
\deriv[]{\vartheta}\Radon f (-\pi, t)  = \deriv[]{\vartheta}\Radon f(\pi, t) & \text{ for }t\in (-1,1).
\end{cases}
\end{equation}
In total, by abbreviating the right-hand side in~\eqref{eq:poisson_equation} as
\begin{equation*}
\Radon^\lambda_\laplace(f) \coloneqq \left(\tfrac{\lambda}{\pi}\left[\cos(\tfrac{\pi}{\lambda}\Radon^\lambda f)\laplace \sin(\tfrac{\pi}{\lambda}\Radon^\lambda f) - \sin(\tfrac{\pi}{\lambda}\Radon^\lambda f)\laplace\cos(\tfrac{\pi}{\lambda}\Radon^\lambda f)\right]\right)
\end{equation*}
and setting $\Omega_\vartheta = (-\pi,\pi)$, $\Omega_t = (-1,1)$, we obtain the {\em Poisson problem}
\begin{equation}\label{eq:poisson_problem}
\left\{\begin{aligned}
\laplace \Radon f & =\Radon^\lambda_\laplace(f) && \text{on } \Omega \\
\Radon f (\cdot, -1) & = 0, \; \Radon f (\cdot, 1) = 0 && \text{on } \Omega_\vartheta \\
\Radon f(- \pi, \cdot) & = \Radon f(\pi, \cdot); \; \tfrac{\partial}{\partial\vartheta}\Radon f (-\pi, \cdot) = \tfrac{\partial}{\partial\vartheta}\Radon f(\pi, \cdot) && \text{on } \Omega_t.
\end{aligned}\right.
\end{equation}

\section{Analytical Inversion of the Modulo Radon Transform}\label{sec:analytical_inversion}

The solution of the boundary value problem~\eqref{eq:poisson_problem} enables the inversion of the modulo operation.
To guarantee the existence of a unique solution, we employ certain periodic function spaces.
A periodic function is defined on the torus $\T^2 = [-\pi, \pi]\times [-1,1] $ such that opposite points are identified with each other, analogously to~\cite[§~9.1.1]{Triebel1983}.
With this, the {\em Lebesgue space $\Lper\left(\T^2\right)$ of periodic and square-integrable functions} is defined as
\begin{equation*}
\Lper\left(\T^2\right) \coloneqq \set{f: \T^2\rightarrow \R \text{ periodic, measurable}}{\norm{f}{\Lper} < \infty}
\end{equation*}
with norm
\begin{equation*}
\norm{f}{\Lper} \coloneqq \left(\int_{\T^2} \abs{f(x)}^2 \d x\right)^{\frac{1}{2}}
\end{equation*}
while identifying functions which agree almost everywhere on $\T^2$.
Let $\partial_{\vartheta}$ and $\partial_t$ denote the first weak partial derivative with respect to $\vartheta$ and $t$, respectively.
Then, the {\em periodic Sobolev space $\HperT{2}$} on $\T^2$ is defined as
\begin{equation*}
\HperT{2} \coloneqq \set{f \in \Lper\left(\T^2\right)}{ \partial_{\vartheta}f, ~  \partial_{t} f \in \Lper\left(\T^2\right) }
\end{equation*}
with
\begin{equation*}
\norm{f}{\Hper} \coloneqq \left(\norm{f}{\Lper}^2 + \norm{\partial_\vartheta f}{\Lper}^2 + \norm{\partial_t f}{\Lper}^2\right)^{\frac{1}{2}}.
\end{equation*}
Furthermore, we define the subspace $\HperDT{2}$ that encodes homogeneous Dirichlet boundary conditions on a part of the boundary as the closure
\begin{equation*}
\HperDT{2} \coloneqq \closure{\cperD{\infty}{2}}^{\Hper}
\end{equation*}
of the space $\cperD{\infty}{2}$ of periodic smooth functions vanishing in a neighborhood of $\Gamma_D$
\begin{equation*}
\cperD{\infty}{2} \coloneqq \set{\varphi \in \Cont^{\infty}_{\per}(\T^2)}{ \begin{matrix} \exists \, 0 < r < 1 \; \forall \, \vartheta \in (-\pi,\pi)\colon \\
\supp \varphi(\vartheta, \cdot) \subseteq (-r, r)\end{matrix}}.
\end{equation*}
Employing standard arguments, the well-known Poincaré inequality can be adapted to the setting of $\HperDT{2}$.
\begin{proposition}[Poincaré inequality]\label{prop:poincare}
There exists a constant $C_p>0$ such that
\begin{equation*}
\norm{u}{\Lper} \leq C_p \norm{\nabla u}{\Lper}
\quad \forall \, u\in \HperDT{2}.
\end{equation*}
\end{proposition}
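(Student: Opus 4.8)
The plan is to prove the inequality first for the smooth test functions in $\cperD{\infty}{2}$ and then extend it to all of $\HperDT{2}$ by continuity, using that the latter is by definition the $\Hper$-closure of the former. The key structural observation is that, even though functions in $\HperDT{2}$ are only periodic (and in general nonzero) in the angular variable $\vartheta$, they do vanish near the Dirichlet part $\Gamma_D$ of the boundary, i.e.\ near $t=\pm1$. It is precisely this homogeneous Dirichlet condition in the radial variable that excludes nonzero constant functions and thereby makes a Poincaré estimate possible; the periodicity in $\vartheta$ is irrelevant for the bound.

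First I would fix $\varphi\in\cperD{\infty}{2}$. Since $\varphi(\vartheta,-1)=0$ for every $\vartheta\in(-\pi,\pi)$, the fundamental theorem of calculus gives $\varphi(\vartheta,t)=\int_{-1}^{t}\partial_t\varphi(\vartheta,s)\,\d s$ for all $t\in(-1,1)$, and the Cauchy--Schwarz inequality then yields the pointwise bound $\abs{\varphi(\vartheta,t)}^2\le (t+1)\int_{-1}^{1}\abs{\partial_t\varphi(\vartheta,s)}^2\,\d s\le 2\int_{-1}^{1}\abs{\partial_t\varphi(\vartheta,s)}^2\,\d s$. Integrating this estimate over $(\vartheta,t)\in\T^2$ and noting that the right-hand side does not depend on $t$ (so integration in $t$ over $(-1,1)$ merely contributes a factor $2$) produces
\[
\norm{\varphi}{\Lper}^2 \le 4\int_{-\pi}^{\pi}\!\int_{-1}^{1}\abs{\partial_t\varphi(\vartheta,s)}^2\,\d s\,\d\vartheta = 4\,\norm{\partial_t\varphi}{\Lper}^2 \le 4\,\norm{\nabla\varphi}{\Lper}^2,
\]
so that the claim holds on $\cperD{\infty}{2}$ with the explicit constant $C_p=2$ (a sharper constant $C_p=\sqrt2$ follows by keeping the factor $(t+1)$ instead of bounding it by $2$).

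Finally I would remove the smoothness assumption by a density argument. Given $u\in\HperDT{2}$, choose $\varphi_n\in\cperD{\infty}{2}$ with $\varphi_n\to u$ in $\Hper$. Both sides of the inequality are continuous with respect to the $\Hper$-norm, since $\norm{\cdot}{\Lper}$ and $\norm{\nabla\,\cdot}{\Lper}$ are each dominated by $\norm{\cdot}{\Hper}$; hence $\norm{\varphi_n}{\Lper}\to\norm{u}{\Lper}$ and $\norm{\nabla\varphi_n}{\Lper}\to\norm{\nabla u}{\Lper}$, and passing to the limit in $\norm{\varphi_n}{\Lper}\le C_p\,\norm{\nabla\varphi_n}{\Lper}$ gives the result for $u$. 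I expect the only genuinely non-routine point to be this closure step: the one-dimensional integration argument is pointwise and valid only for smooth functions, so the assertion for general $u\in\HperDT{2}$ rests entirely on the density of $\cperD{\infty}{2}$ and on the fact that the Dirichlet trace at $t=\pm1$ is preserved in the limit, which is exactly what the definition of $\HperDT{2}$ as an $\Hper$-closure guarantees.
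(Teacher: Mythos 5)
Your proof is correct. The paper itself omits the proof of Proposition~\ref{prop:poincare}, stating only that it follows by adapting standard arguments to $\HperDT{2}$, and your argument --- the one-dimensional fundamental-theorem-of-calculus estimate in the radial variable for $\varphi\in\cperD{\infty}{2}$ (using that $\varphi(\vartheta,-1)=0$), Cauchy--Schwarz, integration over $\T^2$, and then extension to all of $\HperDT{2}$ by density in the $\Hper$-norm --- is exactly that standard route, with the correct observation that the Dirichlet condition at $t=\pm1$, not the periodicity in $\vartheta$, is what rules out constants and makes the estimate (with explicit constant $C_p=\sqrt{2}$) possible.
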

Due to space limitations, we omit the proof.
Instead, we now formulate our main theoretical result: an analytical inversion formula for the MRT.

\begin{theorem}[Inversion formula for $\Radon^\lambda$]\label{theo:inversion_mrt}
Let $f \in \Lebesgue^1(\Ball_1(0)) \cap \Cont(\closure{\Ball_1(0)})$ be compactly supported in $\Ball_1(0)$ with $\Fourier f \in \Lebesgue^1(\R^2)$.
Moreover, let $\Radon f \in \Cont^2(\overline{\Omega})$ with $\Omega = (-\pi, \pi)\times (-1,1)$ and set
\begin{equation*}
\Radon^\lambda_\laplace(f) \coloneqq \left(\tfrac{\lambda}{\pi}\left[\cos(\tfrac{\pi}{\lambda}\Radon^\lambda f)\laplace \sin(\tfrac{\pi}{\lambda}\Radon^\lambda f) - \sin(\tfrac{\pi}{\lambda}\Radon^\lambda f)\laplace\cos(\tfrac{\pi}{\lambda}\Radon^\lambda f)\right]\right).
\end{equation*}
Then,
\begin{equation} \label{eq:mrt_inverse}
f(x)  = \frac{1}{4 \pi} \Back \left( \Fourier_1^{-1} \left[\abs{S} \Fourier_1\left(\laplace^{-1}\Radon^\lambda_\laplace(f)\right)(\vartheta, S)\right]\right)(x)
\end{equation}
holds for all $x \in \R^2$, where $\laplace^{-1} \Radon^\lambda_\laplace(f)$ is the weak solution to~\eqref{eq:poisson_problem} for given $\Radon^\lambda f$.
\end{theorem}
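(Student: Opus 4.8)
The plan is to reduce the claimed formula~\eqref{eq:mrt_inverse} to the classical filtered back projection formula~\eqref{eq:fbp_formula} by identifying the weak solution $\laplace^{-1}\Radon^\lambda_\laplace(f)$ of the Poisson problem~\eqref{eq:poisson_problem} with the Radon transform $\Radon f$ itself. Once $\laplace^{-1}\Radon^\lambda_\laplace(f) = \Radon f$ is established, substituting this into~\eqref{eq:mrt_inverse} reproduces~\eqref{eq:fbp_formula} verbatim, with $\Fourier_1$ acting as the one-dimensional Fourier transform in the radial variable $S$. The hypotheses on $f$ — continuous, compactly supported in $\Ball_1(0)$ (so that the zero extension lies in $\Lebesgue^1(\R^2)\cap\Cont(\R^2)$), with $\Fourier f \in \Lebesgue^1(\R^2)$ — are precisely those under which~\eqref{eq:fbp_formula} holds pointwise for all $x \in \R^2$. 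Thus the entire argument hinges on proving the identification, which I would carry out in three steps.

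First, I would observe that the right-hand side $\Radon^\lambda_\laplace(f)$, though written in terms of the discontinuous $\Radon^\lambda f$, coincides with the smooth field $\laplace\Radon f$. This is the periodicity argument already used to pass from~\eqref{eq:laplace_radon} to~\eqref{eq:poisson_equation}: by the decomposition~\eqref{eq:mod_decomposition}, $\tfrac{\pi}{\lambda}\Radon f$ and $\tfrac{\pi}{\lambda}\Radon^\lambda f$ differ by an integer multiple of $2\pi$, so $\sin(\tfrac{\pi}{\lambda}\Radon^\lambda f) = \sin(\tfrac{\pi}{\lambda}\Radon f)$ and likewise for the cosine. Since $\Radon f \in \Cont^2(\overline{\Omega})$, these composed functions are genuinely $\Cont^2$, their Laplacians are classical, and~\eqref{eq:laplace_radon} yields $\Radon^\lambda_\laplace(f) = \laplace\Radon f$ pointwise on $\Omega$. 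In particular the data of the Poisson problem are continuous on $\overline{\Omega}$ and hence lie in $\Lper(\T^2)$.

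Second, I would set up the weak formulation of~\eqref{eq:poisson_problem}: find $u \in \HperDT{2}$ with $\int_\Omega \nabla u \cdot \nabla \varphi \, \d x = -\int_\Omega \Radon^\lambda_\laplace(f)\, \varphi \, \d x$ for all $\varphi \in \HperDT{2}$. The mixed Dirichlet-periodic boundary conditions~\eqref{eq:boundary_conditions} are exactly what makes the boundary terms in Green's identity vanish: the periodicity of $\partial_\vartheta\Radon f$ together with the periodicity of the test functions cancels the contributions on $\Gamma_P$, while the homogeneous Dirichlet condition (built into $\HperDT{2}$) handles $\Gamma_D$. Existence and uniqueness of the weak solution then follow from the Lax--Milgram theorem: the bilinear form $a(u,\varphi) = \int_\Omega \nabla u \cdot \nabla \varphi\,\d x$ is continuous, and its coercivity $a(u,u) = \norm{\nabla u}{\Lper}^2 \geq (1+C_p^2)^{-1}\norm{u}{\Hper}^2$ on $\HperDT{2}$ is supplied by the Poincaré inequality of Proposition~\ref{prop:poincare}, while the functional $\varphi \mapsto -\int_\Omega \Radon^\lambda_\laplace(f)\,\varphi\,\d x$ is bounded since $\Radon^\lambda_\laplace(f) = \laplace\Radon f$ is continuous on the compact set $\overline{\Omega}$.

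Third, I would verify that $\Radon f$ is this unique weak solution. Because $f$ is compactly supported in the \emph{open} ball, $\supp f \subseteq \closure{\Ball_r(0)}$ for some $r<1$, so $\Radon f(\vartheta, \cdot)$ vanishes for $\abs{t} > r$; together with the $2\pi$-periodicity and $\vartheta$-differentiability recorded in §\ref{sec:Poisson_equation}, this places $\Radon f$ in $\HperDT{2}$ via a standard periodic mollification/approximation by functions in $\cperD{\infty}{2}$. Integrating the classical identity $\laplace\Radon f = \Radon^\lambda_\laplace(f)$ against a test function and using the vanishing boundary terms shows that $\Radon f$ satisfies the weak formulation, whence by uniqueness $\laplace^{-1}\Radon^\lambda_\laplace(f) = \Radon f$ and~\eqref{eq:mrt_inverse} collapses to~\eqref{eq:fbp_formula}. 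I expect the main obstacle to be the rigorous handling of this well-posedness step in the nonstandard mixed Dirichlet-periodic space — in particular confirming that the periodic boundary terms genuinely cancel and that the Lax--Milgram framework applies on $\HperDT{2}$, as well as the membership $\Radon f \in \HperDT{2}$ — rather than the essentially algebraic periodicity identity of the second step or the final FBP substitution.
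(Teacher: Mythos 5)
Your proposal matches the paper's proof essentially step for step: the paper likewise establishes $\laplace^{-1}\Radon^\lambda_\laplace(f) = \Radon f$ by proving well-posedness of the weak formulation of~\eqref{eq:poisson_problem} via Lax--Milgram and the Poincar\'e inequality (isolated there as Lemma~\ref{lem:weak_solution}), verifying that $\Radon f \in \HperDT{2}$ is itself a weak solution so that uniqueness forces the identification, and then substituting into the classical FBP formula~\eqref{eq:fbp_formula}. The only cosmetic difference is that you inline the well-posedness argument instead of citing it as a separate lemma.
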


For the proof of the Theorem~\ref{theo:inversion_mrt}, we first show that the Poisson problem~\eqref{eq:poisson_problem} admits a unique weak solution.
To this end, we deduce a weak formulation of the boundary value problem, where our calculations are inspired by~\cite[§5.2,~6.5,~7.4]{Arendt2023} and~\cite[§6.8.2,~8.3]{Salsa2022}.
Assume that $f\in \Lebesgue^1(\Ball_1(0))$, $\Radon^\lambda_\laplace(f) \in \Cont(\closure{\Omega})$ and $\Radon f \in \Cont^2(\closure{\Omega})$. 
Then, $\Radon f$ is a classical solution to the boundary value problem in~\eqref{eq:poisson_problem}.
Analogously to~\cite{Rieder2000}, we periodize $\Radon f$ with respect to $t$, since the values on the opposing boundary parts $(-\pi, \pi) \times \{-1\}$ and $(-\pi, \pi) \times \{1\}$ agree due to the Dirichlet conditions in~\eqref{eq:boundary_conditions}.  
Consequently, multiplying the Poisson equation~\eqref{eq:poisson_equation} with a test function $ \varphi \in \cperD{\infty}{2}$ and applying Green's first identity results in
\begin{equation*}
-\int_{\T^2} \varphi \, \Radon^\lambda_\laplace(f) \: \d (\vartheta, t)  = \int_{\Omega} \nabla \varphi \cdot \nabla \Radon f \: \d (\vartheta, t) - \int_{\partial \Omega} \varphi \, \nabla \Radon f \cdot \nu \: \d \sigma,
\end{equation*}
where the integral along the boundary vanishes. 
To see this, we use that the test function $\varphi$ is periodic and vanishes in a neighborhood of $\Gamma_D$, as well as the condition that the partial derivative $\partial_{\vartheta}\Radon f$ is $2 \pi$-periodic with respect to $\vartheta$.
Finally, we enlarge the set of test functions to $\HperDT{2}$ to obtain the desired weak formulation:
For $\Radon^\lambda_\laplace(f) \in \Lper(\T^2)$, a function $u \in \HperDT{2}$ is called a {\em weak solution} to the Poisson problem~\eqref{eq:poisson_problem} if
\begin{equation}\label{eq:weak_formulation}
\int_{\T^2} \nabla u(\vartheta,t) \cdot \nabla v(\vartheta,t) \: \d (\vartheta, t) = - \int_{\T^2} v(\vartheta,t) \, \Radon^\lambda_\laplace(f)(\vartheta,t) \: \d (\vartheta, t)
\end{equation}
holds for all $v \in \HperDT{2}$.

\begin{lemma}[Weak solution] \label{lem:weak_solution}
Let $f \in \Lebesgue^1(\Ball_1(0))$ with $\Radon^\lambda_\laplace(f) \in \Lper(\T^2)$.
Then, the Poisson problem~\eqref{eq:poisson_problem} has a unique weak solution $u_0 \in \HperDT{2}$ satisfying~\eqref{eq:weak_formulation}.
Moreover, there exists a constant $C>0$ such that the stability estimate
\begin{equation}\label{eq:stability}
\norm{u_0}{\Hper} \leq C \norm{\Radon^\lambda_\laplace(f)}{\Lper}
\end{equation}
holds.
\end{lemma}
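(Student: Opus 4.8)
The plan is to recast the weak formulation~\eqref{eq:weak_formulation} as a variational problem and to apply the Lax--Milgram theorem. To this end I would introduce the bilinear form $a(u,v) \coloneqq \int_{\T^2} \nabla u \cdot \nabla v \, \d(\vartheta,t)$ and the linear functional $F(v) \coloneqq -\int_{\T^2} v\, \Radon^\lambda_\laplace(f)\,\d(\vartheta,t)$, so that~\eqref{eq:weak_formulation} becomes the abstract equation $a(u,v)=F(v)$ for all $v\in\HperDT{2}$. As a preliminary step I would record that $\HperDT{2}$ is, by its definition as the $\Hper$-closure of $\cperD{\infty}{2}$, a closed subspace of the Hilbert space $\HperT{2}$ and therefore itself a Hilbert space, which is exactly the setting required by Lax--Milgram.

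Next I would verify the three hypotheses of the theorem. Continuity of $a$ is immediate from the Cauchy--Schwarz inequality, since $\abs{a(u,v)} \le \norm{\nabla u}{\Lper}\,\norm{\nabla v}{\Lper} \le \norm{u}{\Hper}\,\norm{v}{\Hper}$. Boundedness of $F$ follows in the same way, using the assumption $\Radon^\lambda_\laplace(f)\in\Lper(\T^2)$, which yields $\abs{F(v)} \le \norm{\Radon^\lambda_\laplace(f)}{\Lper}\,\norm{v}{\Lper} \le \norm{\Radon^\lambda_\laplace(f)}{\Lper}\,\norm{v}{\Hper}$ and hence $F\in(\HperDT{2})^\ast$.

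The decisive ingredient is coercivity, and this is precisely where the homogeneous Dirichlet condition encoded in $\HperDT{2}$ enters. Diagonally one has $a(u,u)=\norm{\nabla u}{\Lper}^2$, a quantity that would vanish on constants if only periodic conditions were imposed, so coercivity genuinely relies on the Dirichlet part $\Gamma_D$ of the boundary. Since $u\in\HperDT{2}$, the Poincaré inequality of Proposition~\ref{prop:poincare} is available, and combining it with the definition of the $\Hper$-norm gives
\[
\norm{u}{\Hper}^2 = \norm{u}{\Lper}^2 + \norm{\nabla u}{\Lper}^2 \le (C_p^2+1)\,\norm{\nabla u}{\Lper}^2 = (C_p^2+1)\,a(u,u),
\]
so that $a(u,u) \ge (C_p^2+1)^{-1}\norm{u}{\Hper}^2$; that is, $a$ is coercive with constant $\alpha=(C_p^2+1)^{-1}$.

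With all hypotheses in place, Lax--Milgram provides a unique $u_0\in\HperDT{2}$ with $a(u_0,v)=F(v)$ for every $v\in\HperDT{2}$, which is exactly the asserted unique weak solution of~\eqref{eq:poisson_problem}. For the stability estimate~\eqref{eq:stability} I would test with $v=u_0$: coercivity gives $\alpha\norm{u_0}{\Hper}^2 \le a(u_0,u_0)=F(u_0)$, while boundedness of $F$ gives $F(u_0)\le\norm{\Radon^\lambda_\laplace(f)}{\Lper}\norm{u_0}{\Hper}$, and dividing by $\norm{u_0}{\Hper}$ yields $\norm{u_0}{\Hper}\le(C_p^2+1)\,\norm{\Radon^\lambda_\laplace(f)}{\Lper}$, i.e.\ the claim with $C=C_p^2+1$. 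I expect the only real obstacle to be the coercivity step, as it is the one exploiting the mixed-boundary structure through Proposition~\ref{prop:poincare}; the remaining verifications reduce to routine Cauchy--Schwarz estimates.
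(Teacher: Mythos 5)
Your proposal is correct and follows essentially the same route as the paper: both cast~\eqref{eq:weak_formulation} as a Lax--Milgram problem for the bilinear form $\int_{\T^2}\nabla u\cdot\nabla v$, verify continuity via Cauchy--Schwarz, obtain coercivity from the Poincar\'e inequality of Proposition~\ref{prop:poincare}, and derive the stability estimate by testing with $u_0$. The only difference is cosmetic (your coercivity constant is $(C_p^2+1)^{-1}$ versus the paper's $\min\{\tfrac12,\tfrac{1}{2C_p^2}\}$), and your explicit remark that $\HperDT{2}$ is a closed subspace, hence a Hilbert space, is a welcome detail the paper leaves implicit.
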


\begin{proof}
We follow a standard approach based on the classical Lax-Milgram theorem, see e.g.~\cite[Chapter~5.2,~6.5~7]{Arendt2023},~\cite[Chapter~8.3]{Salsa2022}.
To this end, we first define the bilinear form $\alpha: \HperDT{2} \times \HperDT{2} \rightarrow \R$ via
\begin{equation*}
\alpha(u,v) \coloneqq \int_{\T^2} \nabla u(\vartheta,t) \cdot \nabla v(\vartheta,t) \: \d(\vartheta,t).
\end{equation*}
Due to the Poincaré inequality, Proposition~\ref{prop:poincare}, the mapping $\alpha$ is an inner product on $\HperDT{2}$ and the Cauchy-Schwarz inequality implies the continuity of $\alpha$ by
\begin{equation*}
\abs{a(u,v)} \leq \norm{\nabla u}{\Lper} \, \norm{\nabla v}{\Lper} \leq \norm{u}{\Hper} \, \norm{v}{\Hper}.
\end{equation*}
Furthermore, the Poincaré inequality guarantees the existence of $C_p>0$ such that
\begin{equation*}
\alpha(u,u) \geq \frac{1}{2} \norm{\nabla u }{\Lper}^2 + \frac{1}{2C_p^2}  \norm{u}{\Lper}^2 \geq \min\left\{\frac{1}{2}, \frac{1}{2 C_p^2}\right\} \norm{u}{\Hper}^2.
\end{equation*}
This shows the coercivity of $\alpha$ on $\HperDT{2}$.
Secondly, we define the linear form $F: \HperDT{2} \rightarrow \R$ by
\begin{equation*}
F(v) \coloneqq - \int_{\T^2} v(\vartheta,t) \, \Radon^\lambda_\laplace(f)(\vartheta,t) \: \d(\vartheta,t).
\end{equation*}
Again, the Cauchy-Schwarz inequality implies the continuity of $F$ by
\begin{equation*}
\abs{F(v)} \leq \norm{\Radon^\lambda_\laplace(f)}{\Lper} \, \norm{v}{\Lper} \leq \norm{\Radon^\lambda_\laplace(f)}{\Lper} \, \norm{v}{\Hper}.
\end{equation*}
Consequently, all requirements of the Lax-Milgram theorem are satisfied, and there exists a unique $u_0 \in \HperDT{2}$ such that
\begin{equation*}
\int_{\T^2} \nabla u_0 \cdot \nabla v \: \d(\vartheta,t) = \alpha (u_0, v) = F(v) = -\int_{\T ^2} v \, \Radon^\lambda_\laplace(f) \: \d(\vartheta,t)
\end{equation*}
is satisfied for all $v \in \HperDT{2}$, i.e., $u_0$ is the unique weak solution to the Poisson problem~\eqref{eq:poisson_problem}.
Furthermore, the coercivity of $\alpha$ and the continuity of $F$ imply that
\begin{equation*}
\min\left\{\frac{1}{2}, \frac{1}{2 C_p^2}\right\} \norm{u_0}{\Hper}^2 \leq \abs{\alpha(u_0, u_0)} = \abs{F(u_0)} \leq \norm{\Radon^\lambda_\laplace(f)}{\Lper} \norm{u_0}{\Hper}
\end{equation*}
and, thus, dividing by $\norm{u_0}{\Hper}$ gives the stability estimate~\eqref{eq:stability}.\qed
\end{proof}

Using the unique weak solution in Lemma~\ref{lem:weak_solution}, we can now prove our main theorem.

\begin{proof}[Theorem~\ref{theo:inversion_mrt}]
In the first step, we invert the modulo operator by showing that the equation 
\begin{equation}\label{eq:mod_inverse}
\Radon f = \laplace^{-1} \left(\Radon^\lambda_\laplace(f)\right)
\end{equation}
holds pointwise on $\Omega$.
Since $\Radon f \in \Cont^2(\overline{\Omega})$ by assumption, the function $\Radon^\lambda_\laplace(f)$ is square-integrable.
Moreover, since $\Radon^\lambda f$ is periodic with respect to $\vartheta$ and, after periodization, also periodic with respect to $t$, $\Radon^\lambda_\laplace(f)$ is periodic on $\Omega$ and, thus, $\Radon^\lambda_\laplace(f) \in \Lper\left(\T^2\right)$.
Hence, Lemma~\ref{lem:weak_solution} implies the existence of a unique weak solution  $u_0 \coloneqq \laplace^{-1}(\Radon^\lambda_\laplace(f)) \in \HperDT{2}$ to the Poisson problem~\eqref{eq:poisson_problem}.
It remains to argue that $\Radon f$ and $u_0$ coincide.
By assumption, we have $\Radon f \in \cperD{2}{2}$ and, hence, $\Radon f \in \HperDT{2}$.
Furthermore, $\Radon f$ satisfies the Poisson equation~\eqref{eq:poisson_equation} and the boundary conditions~\eqref{eq:boundary_conditions}.
Following the deduction of~\eqref{eq:weak_formulation}, $\Radon f$ is also a weak solution of~\eqref{eq:poisson_problem} and, due to the uniqueness of the weak solution, it follows that $ u_0 = \Radon f \in \HperDT{2}$.
Consequently, the continuity of $\Radon f$ implies that~\eqref{eq:mod_inverse} holds pointwise choosing the continuous representative.

In the second step, we need to invert the Radon operator.
Since $f\in \Cont(\closure{\Ball_1(0)})$ and $\Fourier f \in \Lebesgue^1(\R^2)$ by assumption, the filtered back projection formula~\eqref{eq:fbp_formula} holds pointwise on $\R^2$ and, consequently, the explicit inversion formula~\eqref{eq:mrt_inverse} for the MRT follows by combining~\eqref{eq:mod_inverse} with~\eqref{eq:fbp_formula}. \qed
\end{proof}

\section{Numerical Inversion of the Modulo Radon Transform}\label{sec:numerical_inversion}

We approximate the analytical inversion formula~\eqref{eq:mrt_inverse} numerically by solving the Poisson problem~\eqref{eq:poisson_problem} with Fourier techniques to invert the modulo operation and applying the well-known discrete FBP algorithm to invert the RT.
This combines into our novel {\em Laplacian Modulo Unfolding - Filtered Back Projection} (LMU-FBP) Algorithm~\ref{alg:lmu_fbp}, which we now explain in more detail.

The MRT is discretized using parallel beam geometry~\cite{Natterer2001}, where, due to the evenness of $\Radon f$, it suffices to consider the domain $[0,\pi) \times [-1,1]$.
For this, let $K,M \in \N$ and set $N = 2K+1$. 
Moreover, let $T>0$ be the radial sampling rate.
Then, $\Radon^\lambda f$ is discretized by evaluating at the grid points $\vartheta_m = m\frac{\pi}{M}$, $m\in \idxset{0}{M-1}$, and $t_n = (n-K) T$, $n \in \idxset{0}{N-1}$, resulting in the discrete MRT data
\begin{equation*}
\set{p^\lambda[m, n]\coloneqq \Radon^\lambda f (\vartheta_m, t_n)}{m \in \idxset{0}{M-1}, ~ n \in \idxset{0}{N-1}}.
\end{equation*}

Inspired by~\cite{Schofield2003}, in the first {\em Laplacian Modulo Unfolding} (LMU) stage of Algorithm~\ref{alg:lmu_fbp} we invert the modulo operator numerically by solving the Poisson equation~\eqref{eq:poisson_equation} using discrete Fourier transforms.
This is based on the observation that, under suitable assumptions, the Laplacian of a function $p$ can be computed via
\begin{equation*}
\laplace p = \Fourier^{-1} \left(-\abs{\cdot}^2 \Fourier p \right).
\end{equation*}
To incorporate the boundary conditions~\eqref{eq:boundary_conditions}, the MRT data is extended:
In Step~\ref{alg:periodic_conditions}, the MRT data is extended in $\vartheta$ to the interval $\left[0,2\pi\right)$ such that it becomes $2\pi$-periodic.
For this, the evenness property of $\Radon$ is employed.
To ensure the homogeneous Dirichlet boundary conditions, in Step~\ref{alg:Dirichlet_conditions}, the MRT data is extended in $t$ such that it becomes odd around $N+1$.
Using the discrete Fourier transform ($\DFT$) and its inverse ($\IDFT$), the discrete version of the right-hand side of the Poisson equation~\eqref{eq:poisson_equation} is computed in Step~\ref{alg:rhs}.
Finally, in Step~\ref{alg:PDE}, the Poisson equation is solved numerically and the LMU solution $p_{\LMU}$ is found in Step~\ref{alg:restriction} by restricting to the original index set $\idxset{0}{M-1} \times \idxset{0}{N-1}$.

In the second {\em Filtered Back Projection} (FBP) stage of Algorithm~\ref{alg:lmu_fbp} we apply the discrete FBP algorithm to numerically invert the RT.
This is based on the approximate FBP reconstruction formula
\begin{equation} \label{eq:approx_fbp}
f_{\FBP} = \frac{1}{4\pi} \Back(A_L * \Radon f),
\end{equation}
where a low-pass filter $A_L$ satisfying $\Fourier A_L(S) = |S| \, W(\frac{S}{L})$ with an even window $W \in \Lebesgue^\infty(\R)$ supported in $[-1,1]$ and bandwidth $L>0$ is incorporated to deal with the ill-posedness of the Radon inversion.
The approximate FBP forumla~\eqref{eq:approx_fbp} is discretized using a standard approach, cf.~\cite{Natterer2001}, involving the discrete convolution in Step~\ref{alg:convolution} followed by the discrete back projection in Step~\ref{alg:back_projection}, where an interpolation method $\Int$ is applied to reduce the computational costs.
To this end, the discrete convolution is computed at $t_i$, $i \in \mathcal{I}$, for a sufficiently large index set $\mathcal{I} \subset \Z$.
The result is the LMU-FBP reconstruction $f_{\LMU}$ in grid points $(x_i,y_j) \in \R^2$ for index sets $\mathcal{I}_x, \mathcal{I}_y \subset \N$.

\begin{algorithm}[t]
\caption{LMU-FBP}
\label{alg:lmu_fbp}
\textbf{Input:}
MRT data $p^\lambda[m,n]$ for $m \in \{0, \dots, M-1\}$, $n \in \{0, \dots, N-1\}$;\\
low-pass filter $A_L$ with bandwidth $L > 0$; interpolation method $\Int$

\smallskip
{\small \textbf{Laplacian Modulo Unfolding (LMU):}}
\begin{algorithmic}[1]
\scriptsize
\STATE {\label{alg:periodic_conditions}
\textbf{for} $m \in \idxset{0}{2M-1}$, $n \in \idxset{0}{N-1}$ \textbf{do}\\[0.5ex]
\qquad$\tilde{p}^\lambda[m, n] \gets \begin{cases}
p^\lambda[m, n] & \text{if } m \in \{0, \dots, M-1\} \\
p^\lambda[m-M, N-1-n] & \text{if } m \in\{M, \dots, 2M-1\}
\end{cases}$\\
}
\STATE {\label{alg:Dirichlet_conditions}
\textbf{for} $m\in\idxset{0}{2M-1}$, $n \in \idxset{0}{2N+1}$ \textbf{do}\\[0.5ex]
\qquad$\hat{p}^\lambda[m, n] \gets \begin{cases}
0 & \text{if } n \in \{0, N+1\}              \\
\tilde{p}^\lambda[m, n-1] & \text{if } n \in \idxset{1}{N}      \\
-\tilde{p}^\lambda[m, 2N+1-n] & \text{if } n \in \idxset{N+2}{2N+1} \\
\end{cases}$\\
}
\STATE{\label{alg:rhs}
$\hat{p}^\lambda_{\mathrm{s}} \gets \IDFT[-(2\pi\abs{\cdot})^2 \DFT[\sin(\frac{\pi}{\lambda}\hat{p}^\lambda)]]$, \;
$\hat{p}^\lambda_{\mathrm{c}} \gets \IDFT[-(2\pi\abs{\cdot})^2 \DFT[\cos(\frac{\pi}{\lambda}\hat{p}^\lambda)]]$, \\
$\hat{p}^\lambda_\laplace \gets \frac{\lambda}{\pi}[\cos(\frac{\pi}{\lambda}\hat{p}^\lambda) \hat{p}^\lambda_{\mathrm{s}} - \sin(\frac{\pi}{\lambda}\hat{p}^\lambda) \hat{p}^\lambda_{\mathrm{c}}]$}\\[0.5ex]
\STATE{\label{alg:PDE}
$\hat{p} \gets \IDFT[- \left(2\pi \abs{\cdot}\right)^{-2} \DFT [\hat{p}^\lambda_\laplace]]$}\\[0.5ex]
\STATE{\label{alg:restriction}
\textbf{for} $m\in\idxset{0}{M-1}$, $ n \in \idxset{0}{N-1}$ \textbf{do}\\[0.5ex]
\qquad$p_{\LMU}[m,n] \gets \hat{p}[m, n+1]$\\
}
\end{algorithmic}

{\small \textbf{Filtered Back Projection (FBP):}}
\begin{algorithmic}[1]
\scriptsize
\setcounter{ALC@line}{5}
\STATE{\label{alg:convolution}
\textbf{for} $m\in\idxset{0}{M-1}$, $i \in \mathcal{I}$ \textbf{do}\\[0.5ex]
\qquad$h(\vartheta_m, t_i) \gets T \sum\limits_{n=0}^{N-1} \Fourier^{-1} A_\bandw(t_i - t_n) \, p_{\LMU} [m,n]$
}
\STATE{\label{alg:back_projection}
\textbf{for} $(i,j) \in \mathcal{I}_x \times \mathcal{I}_y$ \textbf{do}\\[0.5ex]
\qquad$f_{\LMU}[i,j] \gets \dfrac{1}{2M} \sum\limits_{m=0}^{M-1} \Int h(\vartheta _m, x_i \cos(\vartheta_m) + y_j \sin(\vartheta_m))$
}
\end{algorithmic}
\textbf{Output:}
LMU-FBP reconstruction $f_{\LMU}(x_i, y_j)$ for $(i, j) \in \mathcal{I}_x \times \mathcal{I}_y$
\end{algorithm}

\smallskip
\noindent
{\bf Improvement Step.}
In~\cite{Schofield2003}, it is proposed to apply an enhancement rounding step, which we adapt to our setting and include after the LMU stage in Algorithm~\ref{alg:lmu_fbp}.
More precisely, our {\em improvement step} is defined as
\begin{equation}\label{eq:improvement_step}
p_{\LMU_+} [m, n] = p^\lambda[m,n] + 2 \lambda \round\Bigl(\frac{p_{\LMU}[m,n] - p^\lambda[m,n]}{2 \lambda}\Bigr)
\end{equation}
for $m \in \idxset{0}{M-1}$ and $n\in \idxset{0}{N-1}$.
This yields exact recovery of the Radon data $p[m,n] = \Radon f(\vartheta_m,t_n)$ if the absolute LMU reconstruction error satisfies $\abs{p_{\LMU}[m, n] - p[m,n]} < \lambda$.
Indeed, using the modulo decomposition property~\eqref{eq:mod_decomposition} with piecewise constant residual $\varepsilon_p[m,n] = \varepsilon_{\Radon f}(\vartheta_m,t_n) \in \Z$, it is
\begin{align*}
p_{\LMU_+}[m,n] &= p^\lambda[m,n] + 2 \lambda \round\Bigl(\frac{p_{\LMU}[m,n] - p[m,n] + 2 \lambda \varepsilon_p[m,n]}{2\lambda}\Bigr) \\
& = p^\lambda[m,n] + 2 \lambda \varepsilon_p[m,n] = p[m,n].
\end{align*}
However, if the absolute LMU reconstruction error is large, then the improvement step in~\eqref{eq:improvement_step} yields undesirable jumps in the recovered Radon data.

\section{Numerical Experiments}\label{sec:numerical_experiments}

\begin{figure}[t]
\centering
\includegraphics[width=\textwidth]{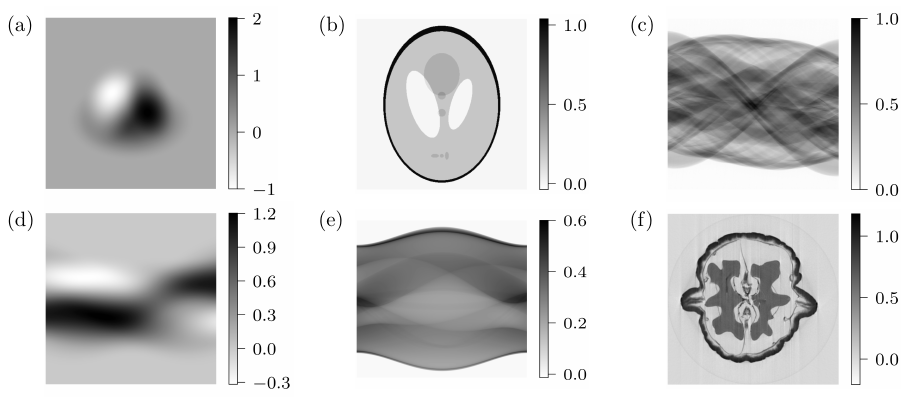}
\caption{Utilized test data for numerical experiments. (a) Smooth phantom from~\cite{Rieder2003}. (b) Shepp-Logan phantom from~\cite{Shepp1974}. (c) Walnut Radon data from~\cite{Siltanen2015}. (d) Radon data of (a). (e) Radon data of (b). FBP reconstruction of (c) serving as ground truth.}
\label{fig:phantoms}
\end{figure}

We now present numerical experiments to demonstrate our inversion approach.
To this end, we use the smooth phantom~\cite{Rieder2003}, depicted in Fig.~\ref{fig:phantoms}(a) along with its Radon data in Fig.~\ref{fig:phantoms}(d), and the classical Shepp-Logan phantom~\cite{Shepp1974} in Fig.~\ref{fig:phantoms}(b), whose Radon data is shown in Fig.~\ref{fig:phantoms}(e).
We also consider the open source walnut dataset~\cite{Siltanen2015}, that includes realistic uncertainties arising from the tomography hardware.
In all cases, we present reconstruction results on a grid of $512 \times 512$ pixels from noisy modulo Radon projections
\begin{equation*}
\set{p_\delta^\lambda[m, n]}{m \in \idxset{0}{M-1}, ~ n \in \idxset{0}{N-1}}
\end{equation*}
of noise level $\delta > 0$ in the sense that $\|p^\lambda - p_\delta^\lambda\|_\infty \leq \delta$ and use the cosine filter with window function $W(S) = \cos(\frac{\pi S}{2}) \, \mathrm{1}_{[-1,1]}(S)$ and optimal bandwidth $L = M$.
We compare our novel LMU-FBP algorithm with the US-FBP method from~\cite{Beckmann2022}, which is based on Unlimited Sampling (US)~\cite{Bhandari2020}.
Note that US-FBP is designed for recovering bandlimited functions but it can be adapted to non-bandlimited data by manually setting the order of forward differences.
Here, we always choose order $1$ as higher orders are observed to fail in our examples.

\begin{figure}[t]
\centering
\includegraphics[width=\textwidth]{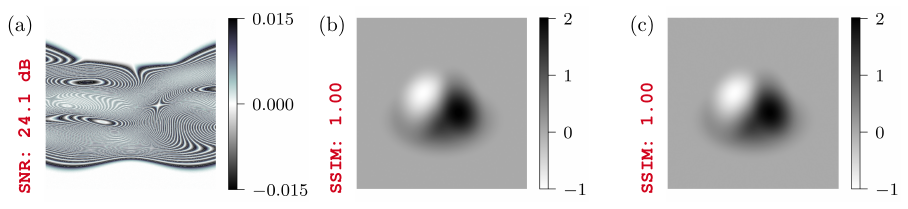}
\caption{Numerical experiments with smooth phantom. (a) Noisy modulo Radon data with $\lambda = 0.015$ and $\delta = 0.05\cdot\lambda$. (b) US-FBP on (a). (c) LMU-FBP on (a).}
\label{fig:smooth}
\end{figure}

\smallskip
\noindent
{\bf Smooth phantom.}
In our first set of proof-of-concept simulations, we consider the smooth phantom from~\cite{Rieder2003} with smoothness parameter $\nu = 2.5$ so that our assumptions of Theorem~\ref{theo:inversion_mrt} are satisfied.
Hence, we expect nearly perfect reconstruction via LMU-FBP.
The simulated modulo Radon data with $\lambda = 0.015$ is shown in Fig.~\ref{fig:smooth}(a), compressing the dynamic range by $50$ times and corrupted by uniform noise with noise level $\delta = 0.05 \cdot \lambda$ yielding a signal-to-noise ratio (SNR) of $24.1 \, \mathrm{dB}$.
We use the parameter choices $M = 360$ and $K = 1958$ leading to $N = 3917$ so that $T = \frac{1}{K} \leq \frac{1}{2L\e}$, which guarantees that US-FBP stably recovers an $L$-bandlimited function from MRT samples.
Although the smooth phantom is not bandlimited, we see in Fig.~\ref{fig:smooth}(b) that US-FBP nearly perfectly recovers with a structural similarity index measure (SSIM) of $1.00$.
The same is true for our newly proposed LMU-FBP reconstruction scheme, see Fig.~\ref{fig:smooth}(c).

\begin{figure}[t]
\centering
\includegraphics[width=\textwidth]{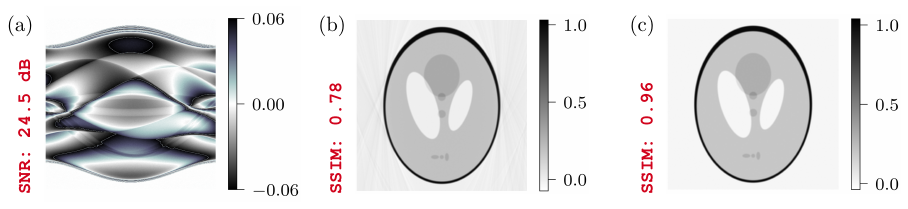}
\caption{Numerical experiments with Shepp-Logan phantom. (a) Noisy modulo Radon data with $\lambda = 0.06$ and $\nu = 0.05\cdot\lambda$. (b) US-FBP on (a). (c) LMU$_+$-FBP on (a).}
\label{fig:shepp-logan}
\end{figure}

\smallskip
\noindent
{\bf Shepp-Logan phantom.}
To also deal with a non-smooth test case, we now consider the classical Shepp-Logan phantom, which is piecewise constant and has jump discontinuities so that our assumptions in Theorem~\ref{theo:inversion_mrt} are {\em not} satisfied.
The simulated MRT data with $\lambda = 0.06$ is shown in Fig.~\ref{fig:shepp-logan}(a), compressing the dynamic range by about $5$ times and corrupted by uniform noise with $\delta = 0.05 \cdot \lambda$ leading to an SNR of $24.5 \, \mathrm{dB}$.
In this case, we see that US-FBP introduces artefacts in the reconstruction, cf.\ Fig.~\ref{fig:shepp-logan}(b), while our improved LMU$_+$-FBP method yields a nearly perfect reconstruction with SSIM of $0.96$, cf.\ Fig.~\ref{fig:shepp-logan}(c).

\begin{figure}[t]
\centering
\includegraphics[width=\textwidth]{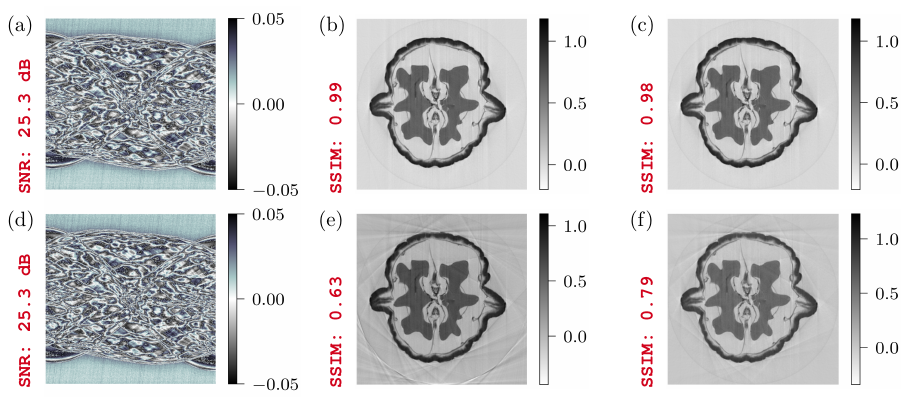}
\caption{Numerical experiments with walnut dataset. (a) Noisy modulo Radon data with $\lambda = 0.05$ and $\nu = 0.05\cdot\lambda$. (b) US-FBP on (a). (c) LMU-FBP on (a). (d) 2-times downsampled noisy modulo Radon data. (e) US-FBP on (d). (f) LMU-FBP on (d).}
\label{fig:walnut}
\end{figure}

\smallskip
\noindent
{\bf Walnut data.}
We finally present reconstruction results for the walnut dataset from~\cite{Siltanen2015}, which is transformed to parallel beam geometry with $M = 600$ and $K = 1128$.
Moreover, the Radon data is normalized to the dynamical range $[0,1]$, see Fig.~\ref{fig:phantoms}(c).
The corresponding FBP reconstruction is shown in Fig.~\ref{fig:phantoms}(f) and serves as ground truth for comparing our reconstruction results.
Simulated modulo Radon projections with $\lambda = 0.05$ are displayed in Fig.~\ref{fig:walnut}(a), where we added uniform noise with $\delta = 0.05 \cdot \lambda$ to account for quantization errors leading to an SNR of $25.3 \, \mathrm{dB}$.
The reconstruction with US-FBP is shown in Fig.~\ref{fig:walnut}(b) and with LMU-FBP in Fig.~\ref{fig:walnut}(c).
Both algorithm yield a reconstruction of the walnut that is indistinguishable from the FBP reconstruction with an $\SSIM$ of $0.99$ and $0.98$, respectively, while compressing the dynamic range by $10$ times.
The results for twice radially downsampled noisy MRT data are shown in Fig.~\ref{fig:walnut}(d)-(f).
While US-FBP produces severe artefacts, LMU-FBP still gives a decent reconstruction.

\section{Conclusion}

In this work, we proved a novel analytical inversion formula for the MRT closing a gap in the existing literature.
Discretization with Fourier techniques lead to the new LMU-FBP algorithm, which can handle non-bandlimited Radon data and performs on par or even better than US-FBP in this case.
Future work includes weakening the assumptions and analyzing recovery guarantees for discrete data.

%
%
\begin{credits}
\subsubsection{\ackname}
This work was supported by the Deutsche Forschungsgemeinschaft (DFG) - Project numbers 530863002 and 281474342/GRK2224/2.
\end{credits}

%
%
\bibliographystyle{splncs04}

\end{document}